\title{${\cal C}^{1, \gamma} $ regularity for singular or degenerate fully nonlinear operators and applications} 
\author{Isabeau Birindelli \\
Dipartimento di Matematica, Sapienza Universit\`a\  di Roma
\and
  Fran\c{c}oise Demengel\\
  D\'epartement de Math\'ematiques,
Universit\'e\  de Cergy-Pontoise
  \and
   Fabiana  Leoni\\ 
   Dipartimento di Matematica, Sapienza Universit\`a\  di Roma}
\date{}
\newtheorem{theo}{Theorem}[section]
\newtheorem{prop}[theo]{Proposition}
\newtheorem{rema}[theo]{Remark}
\def\R{\mathbb  R}
\def\grad{\nabla}
\begin{document}
\maketitle
\begin{abstract}
In this note,   we prove $\mathcal{C}^{1,\gamma}$ regularity  for solutions
of some fully nonlinear degenerate elliptic equations with  "superlinear"  and "subquadratic " Hamiltonian terms.  As an application, we  complete  the results of \cite{BDL1}  concerning the associated ergodic problem,  proving, among other facts,  the uniqueness, up to constants, of the ergodic function. 
\end{abstract} 

\section{Introduction}

The goal of the present paper is to establish $C^{1,\gamma}$ regularity results and to obtain a priori estimates for viscosity solutions of a class of fully nonlinear elliptic equations which may be singular or degenerate at the points where the gradient of the solution vanishes. 
\medskip

Regularity properties of viscosity solutions of fully nonlinear elliptic equations have been studied since a long time, starting with the seminal paper of Caffarelli \cite{Caf} in 1989, which contains in particular ${ \cal C}^{1, \gamma}$ estimates for $F( x, D^2 u) = f$ when $f \in L^p$, $p> n$. His results were extended to $L^p$ viscosity solutions of operators $F(x, D^2 u, Du)$  at most linear in the gradient   by Swiech in \cite{S}. Later, Winter \cite{W} proved ${ \cal C}^{1, \gamma} ( \overline{ \Omega}) $ estimates in the presence of  a   regular boundary  datum. 

 In the recent preprint  \cite{SN}, Saller Nornberg proves ${ \cal C}^{1, \gamma}$ and $W^{2,p}$ results for $L^p$ viscosity solutions, when $F(x, D^2 u, Du)$ is fully nonlinear, uniformly elliptic and  at most quadratic  in the gradient. 
 
  In \cite{BD1}, the  first two authors of the present paper consider singular or degenerate equations of the form
  $$| \nabla u |^\alpha F( D^2 u) = f(x, \nabla u)\, ,$$
   where $F$ is fully nonlinear  uniformly  elliptic,  $\alpha > -1$,  and $f$  has growth  at most of order $1+ \alpha$ in the gradient. 
    Lipschitz regularity results are proved in \cite{BD1}, and ${ \cal C}^{1, \gamma}$ regularity   up to the boundary in  the case $\alpha \leq 0$ in \cite{BD2}, for the   Dirichlet problem with homogeneous boundary conditions.  
     Later, ${ \cal C}^{1, \gamma}$ interior regularity  was obtained in the case $\alpha >0$  by Imbert and Silvestre \cite{IS}, when $f$ does not depend on  $\nabla u$. These results have been  extended to  the case where $f$ depends on the gradient with  growth at most $\alpha+1$,  and to boundary ${ \cal C}^{1, \gamma}$ results in the presence of sufficiently regular  boundary datum, in \cite{BDcocv}, \cite{BD3}, \cite{BD4}.  
     \medskip
     
      In this note we prove ${ \cal C}^{1, \gamma}$ interior and boundary  regularity results  when the equation possesses  some Hamiltonian "superlinear" but at most  "quadratic" in the gradient.  More precisely, 
 we consider equations of the form
\begin{equation}\label{eq1}
- |\nabla u|^\alpha F(  D^2 u)+ b(x) | \nabla u |^\beta   = f(x)\, ,
\end{equation}
 where the coefficient functions $b$ and $f$ are continuous, and the exponents $\alpha$ and $\beta$ always satisfy  $\alpha>-1$ and $\alpha+1<\beta \leq \alpha +2$. On the second order operator  $F$, we assume it is a continuous function 
 $F:{\mathcal S}_N\to \R$  defined on the set ${\mathcal S}_N$  of $N\times N$ symmetric matrices, positively homogeneous of the degree one, and satisfying further the uniform ellipticity condition 
\begin{equation}\label{eqF1}
a \, {\rm tr}(N) \leq F(M+N)-F( M) \leq  A \, {\rm tr}(N)\, ,
\end{equation} 
 for any $M, N\in {\mathcal S}_N$, with $N\geq 0$,  for given positive constants $A\geq a>0$.
 
 \noindent The considered equations include, as a very special case, the semilinear equation
 $$
 -\Delta u + b(x) |Du|^\beta= f(x)
 $$
 with $1<\beta\leq 2$, and it is for this reason that   the growth of  the first order terms of equations \eqref{eq1} is referred to as "superlinear" and "subquadratic".
 
\noindent The definition of viscosity solution we adopt is the one   firstly  introduced  in \cite{BD1},  which is equivalent to the usual one in the case $\alpha \geq 0$, and, in  any cases, allows not to test points where the gradient of the test function is zero, except in the locally constant case.

In the  paper \cite{BDL2}, we  proved local and global Lipschitz regularity results for viscosity solutions of \eqref{eq1}. In particular, we showed that if $u$ satisfies in the viscosity sense equation \eqref{eq1} in a domain $\Omega\subseteq \R^N$, then, for any pair of bounded subdomains $\omega\subset\subset \omega'\subset\subset \Omega$, there exists a positive constant $M$ depending on $a, A, N, \alpha, \beta, \|u\|_{L^\infty (\omega')}, \|f\|_{L^\infty (\omega')}, \|b\|_{W^{1, ^\infty} (\omega')}$ and on $ \omega, \omega'$ such that
$$
|\nabla u(x)|\leq M\qquad \hbox{a.e. in } \omega\, .
$$
Our main result in the present paper reads as follows.
\begin{theo}\label{C1reg}Suppose that $\Omega$ is an open subset in $\R^N$, that $f \in { \cal C} ( \Omega)$, and $b \in W^{1, \infty}_{loc} ( \Omega)$. 
Let $u\in C(\Omega)$ be a viscosity solution of \eqref{eq1} in $\Omega\subseteq \R^N$. Then, there exists $0<\gamma\leq {1\over 1+ \alpha^+}$ depending on the data, such that $u$ belongs to $C^{1,\gamma}_{loc}(\Omega)$ and, moreover, for any pair of subsets $\omega\subset\subset \omega'\subset\subset \Omega$ one has
\begin{equation}\label{bound1}
|\nabla u|_{C^{0,\gamma}(\omega)}\leq C_1 \left( |u|_{L^\infty(\omega')} , |b|_{ W^{1, \infty}( \omega^\prime)}, |f| _{L^\infty(\omega')}\right)\, .
\end{equation}
\end{theo}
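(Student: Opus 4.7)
The plan is to establish the $C^{1,\gamma}$ estimate by adapting the Ishii-Lions doubling-of-variables technique, already used in \cite{BD2,IS,BDcocv} for the homogeneous case $b\equiv 0$, to incorporate the "superlinear subquadratic" Hamiltonian $b(x)|\nabla u|^\beta$. The essential input is the local Lipschitz bound $\|\nabla u\|_{L^\infty(\omega')}\leq M_0$ from \cite{BDL2}, which makes both $|\nabla u|^\alpha$ and $|\nabla u|^\beta$ a priori bounded and reduces, in effect, the analysis to a uniformly elliptic equation with bounded coefficients away from the zero set of $\nabla u$.

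Concretely, for $x_0 \in \omega$ and $R>0$ with $B_{2R}(x_0) \subset \omega'$, consider the auxiliary function
$$
\Phi(x,y) := u(x) - u(y) - L\,|x-y|^{1+\gamma} - \lambda\bigl(|x-x_0|^2 + |y-x_0|^2\bigr)
$$
on $\overline{B_R(x_0)} \times \overline{B_R(x_0)}$, with parameters $L,\lambda>0$ and $\gamma\in(0,1/(1+\alpha^+)]$ to be tuned. Arguing by contradiction, suppose $\sup \Phi > 0$; then the supremum is attained at an interior point $(\bar x, \bar y)$ with $\bar x \neq \bar y$. The Crandall-Ishii-Lions lemma yields $(p_x,X) \in \overline{J}^{2,+}u(\bar x)$ and $(p_y, Y) \in \overline{J}^{2,-}u(\bar y)$ together with a quantitative matrix inequality, where
$$
p_x = L(1+\gamma)\,r^{\gamma-1}(\bar x - \bar y) + 2\lambda(\bar x - x_0),\quad p_y = L(1+\gamma)\,r^{\gamma-1}(\bar x - \bar y) - 2\lambda(\bar y - x_0)
$$
with $r := |\bar x - \bar y|$. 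The Lipschitz bound enters decisively: $|p_x|,|p_y|\leq M_0 + C\lambda R$, so every power of $|p|$ appearing in the equation is a priori controlled. Subtracting the sub- and supersolution inequalities and invoking uniform ellipticity \eqref{eqF1} yields on the left a "gain" of order $c\,L\,r^{\gamma-1}$ (modulo the degenerate $|p|^\alpha$ weight), against a Hamiltonian/source remainder
$$
|f(\bar x) - f(\bar y)| + |b(\bar x) - b(\bar y)|\,M_0^\beta + |b|_\infty\bigl||p_x|^\beta - |p_y|^\beta\bigr|,
$$
all of which are $L$-independent: the second by the Lipschitz regularity of $b$, the third by the crucial cancellation $p_x - p_y = 2\lambda(\bar x + \bar y - 2 x_0) = O(\lambda R)$ coming from the symmetric form of the test function. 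For $L$ large, the gain beats the remainder, yielding the contradiction.

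The principal technical obstacle, genuinely new with respect to \cite{BD2,IS}, is the Hamiltonian term $b(x)|\nabla u|^\beta$, which is superlinear in the gradient ($\beta > \alpha+1$) and hence not directly absorbable by the linear-in-$L$ ellipticity gain. The subquadratic bound $\beta \leq \alpha + 2$ is precisely the threshold at which, once the $|p|^\alpha$ factor is accounted for on both sides, the Hamiltonian difference remains at most quadratic in $|p|$ and can be absorbed into the second-order gain via the symmetry-induced cancellation $p_x - p_y = O(\lambda R)$. The restriction $\gamma \leq 1/(1+\alpha^+)$ arises from matching powers of $L$ and $r$ when $\alpha>0$: the gain becomes of order $L^{1+\alpha}r^{(1+\alpha)\gamma-1}$, and ensuring a net positive contribution from the ellipticity as $r\to 0$ forces $(1+\alpha)\gamma \leq 1$. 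Once the estimate is established at arbitrary $x_0$ with uniform constants, a standard covering argument produces the local bound \eqref{bound1}.
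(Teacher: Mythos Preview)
Your approach has a genuine gap at its foundation. The auxiliary function you propose,
\[
\Phi(x,y)=u(x)-u(y)-L|x-y|^{1+\gamma}-\lambda(|x-x_0|^2+|y-x_0|^2),
\]
cannot yield $C^{1,\gamma}$ regularity. If $\Phi\le 0$, then taking $y=x_0$ gives $u(x)-u(x_0)\le L|x-x_0|^{1+\gamma}+\lambda|x-x_0|^2=o(|x-x_0|)$, which forces $\nabla u(x_0)=0$; since $x_0$ is arbitrary, you would be proving that $u$ is constant. A doubling argument aimed at $C^{1,\gamma}$ must subtract an affine part, i.e.\ work with $u(x)-u(y)-p\cdot(x-y)-L|x-y|^{1+\gamma}$ for a suitable $p$, and this is not what you have written. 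More seriously, the penalization $\psi(z)=L|z|^{1+\gamma}$ is \emph{convex} (its Hessian has all eigenvalues equal to positive multiples of $L|z|^{\gamma-1}$), so in the Ishii--Lions matrix inequality there is no negative eigenvalue to exploit: the ``gain of order $cLr^{\gamma-1}$'' you invoke simply does not arise. The Ishii--Lions machinery produces such a gain only when $\psi''<0$, i.e.\ for H\"older or Lipschitz estimates of $u$ itself, not for $C^{1,\gamma}$. Incidentally, the references you cite do not proceed this way: \cite{IS} obtains $C^{1,\gamma}$ via compactness and an improvement-of-flatness iteration, and \cite{BD2,BDcocv} also do not use a doubling argument with a superlinear penalization.

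For comparison, the paper's route is entirely different and avoids these issues. It first establishes interior $C^{1,\gamma}$ estimates for the equation \emph{without} the Hamiltonian, $-|\nabla w|^\alpha F(D^2 w)=g$ (Proposition~\ref{c1gamma}, relying on \cite{Caf} for $\alpha=0$, \cite{IS} for $\alpha>0$, and a new argument for $\alpha<0$). Then, using the Lipschitz bound $|\nabla u|\le M$ from \cite{BDL2}, it truncates the Hamiltonian via $T_M(|\nabla v|)^\beta$, sets up a Schauder fixed-point map $v\mapsto w$ with the truncated term treated as a bounded right-hand side, and obtains a fixed point $\bar w$ that is $C^{1,\gamma}$ by the estimate just proved. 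Since $|\nabla u|\le M$ makes the truncation inactive for $u$, uniqueness (comparison) forces $u=\bar w$. The superlinear growth of the Hamiltonian is thus neutralized by truncation rather than by any cancellation in a doubled-variable inequality.
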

Note that some more  explicit bound is the following, depending on the Lipschitz norm of $u$: 
$$
|\nabla u|_{C^{0,\gamma}(\omega)}\leq C \,  \left( |u|_{W^{1, \infty }(\omega')} +  |b|_\infty^{1\over 1+ \alpha}  |u|_{W^{1, \infty }(\omega')}^{\beta\over 1+ \alpha} +  |f|^{1\over 1+ \alpha}  _{L^\infty(\omega')}\right)\, .
$$

Theorem \ref{C1reg} covers the case  $\beta \leq \alpha+1$, treated in \cite{BDcocv}. We remark that  the arguments used in \cite{BDcocv}  are  different and fail  in the case $\beta > \alpha+1$. 
     
 Furthermore, Theorem \ref{C1reg} include  the results of \cite{SN} for $\alpha = 0$. We observe that in \cite{SN} the author uses essentially the ABP estimate of \cite{KS} for fully nonlinear elliptic equations with  quadratic growth in the gradient, and Caffarelli's iterative method.        This method cannot be employed when $\alpha \neq 0$.  The main ingredients in the proof of Theorem \ref{C1reg} are the Lipschitz continuity of  solutions, a  fixed point  argument, the  existence and uniqueness result for Dirichlet problem proved in \cite{BDL2}, and  the   ${ \cal C}^{1, \gamma}$  estimates   \cite{Caf}  for $\alpha = 0$,     of      \cite{IS}   and \cite{BD3}   for   $\alpha >0$, and the one proved in   Proposition \ref{c1gamma}   when $\alpha <0$ . 
       
         As an application of the $C^{1,\gamma}_{loc}$ regularity of viscosity solutions of equation \eqref{eq1}, we prove the uniqueness of the ergodic function associated with the considered operators. Let us recall that, as recently proved in \cite{BDL1},  if $\Omega\subset \R^N$ is an open, bounded, $C^2$ domain, if $d(x)$ denotes the distance function from $\partial \Omega$ and if the operator $F$ satisfies the extra regularity assumption
  \begin{equation}\label{smooth}
F(\nabla d(x)\otimes \nabla d(x)) \hbox{ is $\mathcal{C}^2$ in a neighborhood of } \partial \Omega\, ,
\end{equation}
then, given a locally Lipschitz continuous datum $f$, there exists a unique constant $c_\Omega$, called the ergodic constant or additive eigenvalue of $F$, such that the infinite boundary condition problem
\begin{equation}\label{ergodicp}
\left\{ \begin{array}{lc}
-| \nabla u |^\alpha F(D^2 u) + |\nabla u|^\beta   = f +  c_\Omega  & \hbox{ in} \ \Omega \\
u=+\infty &  \hbox{ on} \ \partial \Omega 
\end{array}\right.
\end{equation}
has a solution $u\in C(\Omega)$, called ergodic function.

In Section 3 we will prove the following result. 
\begin{theo}\label{unique} Let $\Omega\subset \R^N$ be a bounded domain of class $C^2$ and let $F$  satisfy  \eqref{eqF1} and \eqref{smooth}. Assume further that $\alpha >-1$, $\alpha+1<\beta < \alpha +2$ and that $f\in C(\Omega)$ is  bounded and locally Lipschitz continuous. Then,  problem (\ref{ergodicp}) has a  unique (up to additive constants) solution, provided that,  when   $\alpha \neq 0$, 
 $\sup_\Omega f <- c_\Omega$ and $\partial \Omega$ is connected.  
\end{theo}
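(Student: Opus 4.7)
The plan is to combine the $C^{1,\gamma}_{loc}$ regularity from Theorem \ref{C1reg} with a strong comparison argument to show that any two ergodic solutions $u_1, u_2$ of \eqref{ergodicp} differ by a constant. A first step is to use the boundary analysis of \cite{BDL1}: both $u_1$ and $u_2$ have the same universal blow-up profile near $\partial\Omega$, so $w := u_2 - u_1$ is bounded on $\Omega$ with $\limsup_{x\to\partial\Omega} w(x) \leq 0$ (and, after swapping the roles of $u_1,u_2$, also $\liminf \geq 0$). Setting $m := \sup_\Omega w$, either $m \leq 0$ (in which case the symmetric inequality already yields $u_1 \equiv u_2$), or $m > 0$ is attained at some interior contact point $x_0 \in \Omega$.

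At $x_0$, by the $C^1$ regularity, $\nabla u_1(x_0) = \nabla u_2(x_0) =: p_0$. When $\alpha \neq 0$, the hypothesis $\sup f < -c_\Omega$ forces $p_0 \neq 0$: a critical point of an ergodic solution would make the equation read $0 = f(x_0) + c_\Omega < 0$, which is impossible. For $\alpha > 0$ this is immediate from the vanishing of $|p|^\alpha$; for $\alpha < 0$ it follows from the viscosity convention of \cite{BD1} for singular operators combined with the fact that the same strict inequality forbids $u_i$ from being locally constant near $x_0$. By continuity, $|\nabla u_i|$ remains bounded below by a positive constant on some ball $U$ around $x_0$.

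On $U$, dividing the equations by $|\nabla u|^\alpha$ produces a standard uniformly elliptic fully nonlinear equation with Hamiltonian $|\nabla u|^{\beta-\alpha}$, smooth away from the origin. Subtracting the equations satisfied by $u_1 + m$ and $u_2$, and using the uniform ellipticity of $F$ together with the local Lipschitz continuity of $p \mapsto |p|^{\beta-\alpha}$ on compact subsets of $\R^N \setminus \{0\}$, the nonnegative function $v := u_1 + m - u_2$, which vanishes at $x_0$, satisfies in the viscosity sense a Pucci-type inequality
$$
\mathcal{M}^-_{a,A}(D^2 v) - C\,|\nabla v| \leq 0 \quad \hbox{in } U\, .
$$
The strong maximum principle of Caffarelli--Cabr\'e then gives $v \equiv 0$ on $U$. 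A standard open/closed propagation argument, exploiting the connectedness of $\Omega$ (which follows from the connectedness of $\partial\Omega$) and the gradient nondegeneracy established above at every zero of $v$, extends this to $v \equiv 0$ on all of $\Omega$, i.e. $u_1 \equiv u_2 - m$.

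The main obstacle is the boundary matching: bounding $w = u_2 - u_1$ requires controlling the asymptotics of two distinct ergodic solutions near $\partial\Omega$ to within a bounded perturbation. The common leading-order profile from \cite{BDL1} depends on $\alpha, \beta$ and the boundary geometry through assumption \eqref{smooth}, but handling the next-order correction demands careful sub/supersolution constructions in a tubular neighborhood of $\partial\Omega$. A secondary delicate point is the singular case $\alpha < 0$, where the linearization in the third step requires precisely the gradient nondegeneracy produced by the assumption $\sup f < -c_\Omega$ in order to avoid the singularity of $|p|^\alpha$ at $p = 0$.
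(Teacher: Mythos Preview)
Your strategy has a genuine gap at the boundary step. The asymptotic estimate \eqref{asym1} from \cite{BDL1} gives only $u_i(x)\sim C(x)\,d(x)^{-\chi}$, i.e.\ $u_1/u_2\to 1$ near $\partial\Omega$; it does \emph{not} yield $u_2-u_1\to 0$, nor even that this difference is bounded (two functions with the same leading blow-up can differ by a lower-order but still divergent term, say $d^{-\chi}$ and $d^{-\chi}+d^{-\chi/2}$). Without an interior maximum point for $w$, the rest of your argument never starts. You flag this yourself as ``the main obstacle'' but do not resolve it, and the ``next-order sub/supersolution constructions'' you allude to are exactly what is missing.

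The paper closes this gap by a different mechanism. It first proves the \emph{gradient} asymptotic $\nabla u\cdot\nabla d\sim -\chi\,C(x)\,d^{-\chi-1}$ (Theorem \ref{asymgrad}), itself a consequence of the $\mathcal{C}^{1,\gamma}$ regularity, and uses $|\nabla u|\geq C\,d^{-\chi-1}$ in a boundary strip $\Omega_\delta$ to show that the multiplicative perturbation $u_\epsilon=(1-\epsilon)u$ is a \emph{strict} subsolution there. The point is that \eqref{asym1} does suffice to give $(1-\epsilon)u<v$ near $\partial\Omega$, so comparison for strict sub/supersolutions forces $u\leq v+\sup_{\{d=\delta\}}(u-v)$ in $\Omega_\delta$ after letting $\epsilon\to 0$. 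The interior region $\Omega\setminus\overline{\Omega_\delta}$ is then handled by the comparison theorem without zero-order terms from \cite{BDL1}, and this is where the hypothesis $\sup_\Omega f<-c_\Omega$ actually enters. Together these localize $\sup_\Omega(u-v)$ to the compact set $\{d=\delta\}$, bypassing any need to control $u-v$ near the boundary.

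There is also a secondary issue in your propagation step. For $\alpha<0$, the viscosity convention of \cite{BD1} does not test at points where the gradient of the test function vanishes (unless the solution is locally constant), so from a non-constant $C^{1}$ solution with $\nabla u(x_0)=0$ you cannot extract the identity ``$0=f(x_0)+c_\Omega$''; your contradiction does not go through. The paper avoids this by running the strong comparison only inside $\Omega_\delta$, where Theorem \ref{asymgrad} guarantees $\nabla u\neq 0$, and this is precisely why the connectedness of $\partial\Omega$ (hence of $\Omega_\delta$) is assumed.
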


\section{Proof of Theorem \ref{C1reg}.}
In all this section we set $|v|_{ W^{1, \infty}} : = | v |_\infty+ | \nabla v |_\infty$. 

We  begin by proving some ${ \cal C}^{1, \gamma}$ interior estimate  for the case $\beta = 0$, which completes the result in \cite{BD2}. The proof is very similar to the one employed  in \cite{BD2}, but we reproduce it here for the sake of completeness. 
  We denote by $C_{lip}
 $ some constant such that, by  \cite{BD1},
              for any $u\in W^{1, \infty} ( B(0,1))$  and for any $g$ continuous and bounded in $B(0,1)$, any  solution  $w$ of 
      \begin{equation} \label{lip}
      \left\{\begin{array}{cl}
       -| \nabla w |^\alpha F( D^2 w) = g & {\rm in } \ B(0,1)\\
        w= u & {\rm on } \ \partial B( 0,1)
        \end{array}\right.
        \end{equation} satisfies 
         \begin{equation} \label{lipalpha}
      |w|_{ W^{1, \infty}(B(0,1))}\leq  C_{lip}  ( |u|_{ W^{1, \infty}(B(0,1))} + |g|_{L^\infty( B(0,1))}^{1 \over 1+ \alpha} ). 
         \end{equation}
         
           We then have the following 
   \begin{prop}\label{c1gamma}
  Under the above assumptions,      
  any   solution $w$ of (\ref{lip})  satisfies : 
 for any $r<1$, there exists $C_r $ such that 
            \begin{equation} \label{Cr}|w|_ {{ \cal C}^{1, \gamma}( B(0, r))}  \leq  C_r ( |u|_{W^{1, \infty}(B(0,1))}+ |g|_{L^\infty( B(0,1))}^{1\over 1+ \alpha})
         \end{equation}
          \end{prop}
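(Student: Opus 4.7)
My plan is to establish Proposition \ref{c1gamma} by a Caffarelli-type compactness and improvement-of-flatness scheme, adapted to the singular regime $\alpha<0$ along the lines of \cite{BD2}, but carried out in the interior and with a non-trivial boundary datum $u$.

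\textbf{Normalization and reduction.} First I would use the Lipschitz estimate \eqref{lipalpha} to rescale. Setting
$$
M:=C_{lip}\bigl(|u|_{W^{1,\infty}}+|g|_\infty^{1/(1+\alpha)}\bigr)
$$
and passing to $\tilde w:=w/M$, the function $\tilde w$ solves $-|\nabla\tilde w|^\alpha F(D^2\tilde w)=\tilde g$ with $\tilde g=M^{-(1+\alpha)}g$, so that $|\tilde w|_{W^{1,\infty}(B_1)}\le 1$ and $|\tilde g|_\infty$ is bounded. Using the standard characterization of $C^{1,\gamma}$ norms by affine approximations at dyadic scales, it is enough to prove the following one-step decay lemma: there exist universal $\gamma\in(0,1/(1+\alpha^+))$, $\rho\in(0,1/2)$ and $\ep_0>0$ such that any solution $v$ of $-|\nabla v|^\alpha F(D^2 v)=h$ in $B(0,1)$ with $|v|_{W^{1,\infty}}\le 1$ and $|h|_\infty\le\ep_0$ admits, at each $x_0\in B(0,1/2)$, an affine function $\ell$ with $|\ell|_{W^{1,\infty}}\le C$ and $|v-\ell|_{L^\infty(B(x_0,\rho))}\le \rho^{1+\gamma}$. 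Iterating this lemma on the rescaled family $v_k(x)=\rho^{-k(1+\gamma)}(v(x_0+\rho^k x)-\ell_k(x_0+\rho^k x))$, which solves the same type of equation with right-hand side rescaled by $\rho^{k[(1+\alpha)(1+\gamma)-(1+\gamma)-\alpha\gamma]}$, yields pointwise $C^{1,\gamma}$ regularity together with the quantitative bound \eqref{Cr}. The restriction $\gamma\le 1/(1+\alpha^+)$ in Theorem \ref{C1reg} is precisely what keeps this rescaled right-hand side bounded along the iteration.

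\textbf{Proof of the decay lemma by compactness.} Suppose it fails. Then I can find sequences $w_n$ of solutions and $g_n\to 0$ in $L^\infty$ with $|w_n|_{W^{1,\infty}}\le 1$ such that no affine function achieves the required decay at some $x_n\in B(0,1/2)$. By \eqref{lipalpha} the $w_n$ are equi-Lipschitz, so Ascoli-Arzel\`a produces a subsequence converging uniformly on compact sets to a Lipschitz $w_\infty$, with $x_n\to x_\infty$. I then pass to the limit in the equation using the BD1 notion of viscosity solution: on any open set where the test function has non-vanishing gradient, the multiplier $|\nabla w_n|^\alpha$ stays locally bounded, while at zero-gradient test points the BD1 definition does not require anything (outside the locally constant case). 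Consequently, $w_\infty$ solves, in the viscosity sense, the uniformly elliptic homogeneous equation $F(D^2 w_\infty)=0$. Caffarelli's interior $C^{1,\gamma_0}$ estimates \cite{Caf} then provide an affine $\ell_\infty$ with $|w_\infty-\ell_\infty|_{L^\infty(B(x_\infty,\rho))}\le C\rho^{1+\gamma_0}$; choosing $\rho$ small and any $\gamma<\gamma_0$, and transferring this decay to $w_n$ for large $n$ by uniform convergence, contradicts the choice of the sequence.

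\textbf{Main obstacle.} The delicate step is the passage to the limit in the singular equation: when $\alpha<0$ the factor $|\nabla w_n|^\alpha$ blows up at points where gradients vanish, and one must guarantee that this does not pollute the limiting viscosity inequalities. This is exactly the reason the BD1 definition is used throughout, since it decouples the singular first-order factor from the second-order part of the operator in the stability argument. Once this stability is in hand, the remaining work is a careful bookkeeping of the scaling exponents to ensure that the iterated right-hand side stays admissible, which pins down the final $\gamma$ and yields the explicit dependence in \eqref{Cr}.
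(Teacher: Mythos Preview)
Your compactness/improvement-of-flatness scheme is a genuinely different route from the paper's proof, which does not iterate at all: the paper regularizes the singular weight by $(\delta^2+|\nabla v|^2)^{\alpha/2}$, builds a Schauder fixed point in the uniformly elliptic setting, passes to the limit $\delta\to 0$, and identifies the limit with $w$ via the comparison principle of \cite{BD1}; the precise bound \eqref{Cr} then follows by one further application of the $\alpha=0$ estimate to $-F(D^2 w)=|\nabla w|^{-\alpha} g$. So the paper reduces everything to Caffarelli's estimates through a fixed-point device, whereas you attempt the Imbert--Silvestre machinery directly.

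However, your iteration has a real gap. After subtracting an affine function $\ell_k$ with slope $p_k$, the rescaled function $v_k$ does \emph{not} solve ``the same type of equation'': it solves
\[
-\,\bigl|\nabla v_k+\rho^{-k\gamma}p_k\bigr|^{\alpha}F(D^2 v_k)=h_k,\qquad h_k(x)=\rho^{\,k(1-\gamma(1+\alpha))}h(x_0+\rho^k x),
\]
with a growing drift $q_k=\rho^{-k\gamma}p_k$ in the gradient factor. Your stated exponent $(1+\alpha)(1+\gamma)-(1+\gamma)-\alpha\gamma$ simplifies to $\alpha$, which for $\alpha<0$ would make $|h_k|$ blow up, so the bookkeeping is incorrect as written. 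More importantly, the decay lemma you prove is only for the unshifted equation; to iterate you must establish it \emph{uniformly in the shift $q$}, which is precisely the heart of \cite{IS} and requires a separate argument when $|q|$ is large (there one divides through by the now non-vanishing factor $|q+\nabla v|^{\alpha}$ and applies Caffarelli directly). For $\alpha<0$ this large-$q$ case produces a right-hand side of order $|q|^{-\alpha}|h|$, and one must check that the combined scaling $|q_k|^{-\alpha}|h_k|\lesssim \rho^{k(1-\gamma)}$ still decays; this is true, but it is exactly the step you omit. Without tracking the shift and proving the uniform-in-$q$ decay, the iteration does not close.
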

          
           \begin{proof} 
           This result is well known in the case $\alpha = 0$, \cite{Caf}, while the case $\alpha >0$ is proved in \cite{IS}. It remains to consider the case $\alpha <0$.
               
      Take $\epsilon = {1 \over 4C_{lip}}$ and $\delta \leq 1 $. 
                 Let 
         $${ \cal K}_R = \{ v \in { \cal C}^1( B(0,1)) \cap W^{1, \infty}(B(0,1)) , | v|_{ W^{1, \infty}} \leq R\}$$
          where  $R$ is chosen large enough, fixed such  that $R\geq |w|_{ W^{1, \infty}}$ and 
          $$R  \geq 2\, C_{lip} ( |f|_\infty  ( 1 
+ R^{-\alpha}) + { R^{1+\alpha }\over 2C_{lip}})+ |u|_{ W^{1, \infty}})$$
           which is possible since both $-\alpha$ and $1+ \alpha$ are lesser than 1. 
           
           We define the map ${ \cal T}: 
            v\mapsto w_\delta$ where $w_\delta$ satisfies 
            $$\left\{ \begin{array}{cl}
   - F(D^2 w_\delta ) =( f -\epsilon |v|^\alpha v + \epsilon |w|^\alpha w) ( \delta^2 + | \nabla v|^2)^{-\alpha \over 2}  & {\rm in} \ \ B(0,1)\\
    w_\delta  = u & {\rm on} \ \partial B(0,1) .
    \end{array}\right.$$
    The map $\mathcal{T}$ is well defined since the right hand side is continuous and bounded. 
     Furthemore, by  (\ref{lipalpha}) in the case $\alpha = 0$, one has 
      \begin{eqnarray*}
       |w_\delta |_{ W^{1, \infty}} &\leq &C_{lip} \left( \left\vert ( f -\epsilon |v|^\alpha v + \epsilon |w|^\alpha w ) ( \delta^2 + | \nabla v|^2)^{-\alpha \over 2}   \right\vert_\infty + |u|_{W^{1, \infty}}\right) \\
       & \leq&C_{lip}\left(  ( |f|_\infty +2 {
      R^{1+ \alpha }  \over 4C_{lip}} )( \delta^{-\alpha} + R^{-\alpha} )\ +  |u|_{W^{1, \infty}}\right)\\
      &\leq & C_{lip}\left( |f|_\infty ( 1+ R^{-\alpha} )+ {R \over 2C_{lip}} +  {
      R^{1+ \alpha }  \over 2 C_{lip}} + |u|_{ W^{1, \infty} }\right)\\
       &\leq & R
       \end{eqnarray*}
        by the  choice of $R$. hence, ${\cal K}_R$ is a closed convex set invariant for  ${ \cal T}$. Moreover, $\mathcal{T}$ is a compact operator ( see \cite{BD1}) so that, by  Schauder's theorem,  ${ \cal T}$ possesses a fixed point denoted by $\overline{w_\delta} $. 
        Note that $\overline{w_\delta} $  has a Lipschitz norm independent of $\delta$ : indeed, using the convexity inequalities 
         $$\epsilon \delta^{-\alpha} |\overline{w_\delta} |_\infty^{1+ \alpha} \leq ( 1+ \alpha) \epsilon  |\overline{w_\delta} |_\infty+ (-\alpha) ( \delta \epsilon)^{-\alpha}$$
          and 
          $$| f + \epsilon |w|^\alpha w |_\infty  | \overline{w_\delta} |^{-\alpha} \leq (-\alpha) \epsilon |\overline{w_\delta} |_\infty +  \epsilon^{\alpha\over 1+ \alpha} (| f |_\infty  \epsilon |w|_\infty ^{1+\alpha})^{1 \over 1+ \alpha}\, ,$$
           one gets that 
           $$|\overline{w_\delta} |_{W^{1, \infty}} \leq2 C_{lip} \left(   \epsilon^{\alpha\over 1+ \alpha} (| f |_\infty  \epsilon |w|_\infty ^{1+\alpha})^{1 \over 1+ \alpha}+ (-\alpha) ( \delta \epsilon)^{-\alpha} +  |u|_{W^{1, \infty}}\right).$$
           
            Furthermore by (\ref{Cr}) in the case $\alpha = 0$,  $\overline{w_\delta} $ satisfies 
            \begin{eqnarray}\label{c3r}
      |\overline{w_\delta} |_{ {\cal C}^{1, \gamma}( B(0,r)} &\leq& C_r  \left(  ( |f|_\infty +{ |w |_{ W^{1, \infty}} \over 4C_{lip} }+  { |\overline{w_\delta} |_{ W^{1, \infty}} \over 4C_{lip} }) (1 + |\overline{w_\delta} |_{ W^{1, \infty}}  ^{-\alpha})+  |u|_{W^{1, \infty}}\right)\nonumber\\
           \end{eqnarray}
      Note  that $\overline{w_\delta}$ satisfies 
      $$\left\{\begin{array}{lc}
      -( \delta^2 + | \nabla \overline{w_\delta}|^2)^{\alpha \over 2} F( D^2 \overline{w_\delta}) + \epsilon |\overline{w_\delta}|^\alpha \overline{w_\delta} = f + \epsilon |w|^\alpha w& {\rm in} \ B(0, 1)\\
      w_\delta = u & {\rm on} \ \partial B(0,1) 
    \end{array}\right.$$
       
       Using the estimate (\ref{c3r}) which is independant on  $\delta$,  up to subsequence,  $w_\delta$ converges locally uniformly  when $\delta$ goes to zero,  towards a solution $\overline{ w}$ of 
      $$\left\{ \begin{array}{lc}
      -| \nabla \overline{w} |^\alpha F( D^2 \overline{w}) + \epsilon |\overline{w}|^\alpha \overline{w}= f+ \epsilon |w|^\alpha w& {\rm in} \ B(0,1)\\
       w= u & {\rm on } \ \partial B(0,1) \end{array} \right.$$
   
     By uniqueness of solutions of such equation ( see \cite{BD1}), one gets that 
       $\overline{ w} = w $ and then   by (\ref{c3r}),   $w$ is ${ \cal C}^{1, \gamma}$ in $B(0, r)$. To get the precise estimate (\ref{Cr}) let us observe that 
        since $w$ is ${ \cal C}^1$, then it is a solution of 
        $$\left\{ \begin{array} {lc}
         -F( D^2 \varphi) = | \nabla w |^{-\alpha} f &\ {\rm in} \ B(0,1)\\
          \varphi = u & {\rm on} \ \partial B(0,1)
          \end{array}\right.$$
           In particular one has  by ( \ref{Cr}) in the case $\alpha = 0$: 
           \begin{eqnarray*} |w|_{ { \cal C}^{1, \gamma} ( B(0, r))} &\leq& C_r (  | \nabla w |_\infty ^{-\alpha} |f|_\infty + |u|_{ W^{1, \infty}} )\\
           &\leq & C_r ( |f|_\infty^{1 \over 1+ \alpha} + |w|_{ W^{1, \infty}} + |u|_{ W^{1, \infty}})\\
           &\leq & C_r (1+ C_{lip}) (  |f|_\infty^{1 \over 1+ \alpha} + |u|_{ W^{1, \infty}})
           \end{eqnarray*}
            So we get (\ref{Cr}) with $C_r $ replaced by $ C_r ( 1+ C_{lip})$. In the following we will denote  for simplicity  $C_r $    this constant.         
      
\end{proof}

 We now   recall the Lipschitz estimate proved in \cite{BDL2}.
 \begin{theo}\label{theolip} Suppose that $F$ is uniformly elliptic,  $f$ is continuous  in $B(0,1)$, $b$ is locally Lipschitz continuous in $B(0,1)$,   $\alpha>-1$ and   $\beta\in (0,\alpha+2]$. Let  $u$ be a  locally bounded   viscosity solution of 
      $$- | \nabla u |^\alpha F(D^2 u)  + b(x)|\nabla u|^\beta =   f(x)\quad \hbox{ in } B(0,1) $$
 Then $u$ is  locally Lipschitz continuous in $B(0,1)$, that is,  for any  $r <r^\prime <1$,  there exists some constant $c$ depending on the ellipticity constants of $F$, on $r$, $r'$ and on universal constants, such  that 
 $$|u|_{W^{1, \infty}(B(0,r))} \leq c( |u|_{L^\infty( B(0, r^\prime))}  , |f|_{L^\infty( B(0, r^\prime))} , |b|_{W^{1, \infty}(B(0, r^\prime))} )$$
  Furthermore, if $u$ and $f$ are bounded and $b$ is bounded and Lipschitz continuous in $B(0,1)$, then $u$ is Lipschitz continuous in $B(0,1)$  and there exists $c$ such that 
  $$|u|_{W^{1, \infty}(B(0,1))} \leq c( |u|_{L^\infty( B(0, 1))}  , |f|_{L^\infty( B(0, 1))} , |b|_{W^{1, \infty}(B(0, 1))} )$$
      \end{theo}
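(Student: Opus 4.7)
I would follow the Ishii-Lions doubling of variables scheme, adapted to the degenerate/singular principal part and to the superlinear but subquadratic first-order growth. Fix $x_0 \in \overline{B(0,r)}$, pick $r < r_1 < r'$, and introduce on $\overline{B(0,r_1)} \times \overline{B(0,r_1)}$ the auxiliary function
$$
\Phi(x,y) = u(x) - u(y) - L \,\omega(|x-y|) - M \bigl(|x - x_0|^2 + |y - x_0|^2\bigr),
$$
where $L, M > 0$ are parameters to be chosen and $\omega : [0,\infty) \to [0,\infty)$ is a smooth bounded concave modulus with $\omega(0) = 0$, $\omega'(0^+) = 1$ and $-\omega''(t) \geq \kappa_0 > 0$ on a short interval $[0, t_0]$ (a typical choice is $\omega(t) = t - \omega_0 t^2$ on $[0, t_0]$, extended as a smooth concave bounded function). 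The goal is to show that, for $L, M$ sufficiently large in terms of the stated data, $\Phi \leq 0$ on its domain; specialising to $y = x_0$ then gives $|u(x) - u(x_0)| \leq L |x - x_0|$ locally, whence the Lipschitz bound upon varying $x_0$. The global version is obtained by the same scheme with a cut-off near $\partial B(0,1)$.

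\textbf{Main steps.}
First, pick $M$ large in terms of $|u|_\infty$ and $r_1 - r$ so that any positive maximum of $\Phi$ is attained in the interior of $B(0, r_1) \times B(0, r_1)$. Arguing by contradiction, assume $\Phi$ attains a positive maximum at $(\bar x, \bar y)$; then $\bar x \neq \bar y$, and the positivity of $\Phi(\bar x, \bar y)$ combined with boundedness of $u$ yields the crucial a priori bound
$$
L \,\omega(|\bar x - \bar y|) \leq u(\bar x) - u(\bar y) \leq 2 |u|_\infty,
$$
so that $|\bar x - \bar y| \leq C |u|_\infty / L \leq t_0$ for $L$ large. Applying the Crandall-Ishii lemma yields $(p_x, X) \in \overline{J}^{2,+} u(\bar x)$, $(p_y, Y) \in \overline{J}^{2,-} u(\bar y)$ with
$$
p_x = p + 2 M(\bar x - x_0), \qquad p_y = p - 2 M(\bar y - x_0), \qquad p = L \omega'(|\bar x - \bar y|) \frac{\bar x - \bar y}{|\bar x - \bar y|},
$$
so that $|p_x|, |p_y|$ are comparable to $L$, in particular nonzero, and the equation is tested classically. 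Concavity of $\omega$ together with the matrix inequality furnishes $X - Y \leq C M I$ in operator norm, with a strongly negative eigenvalue $\leq -2 L \kappa_0$ in the direction $\bar x - \bar y$. Subtracting the sub- and supersolution viscosity inequalities and using \eqref{eqF1} extracts from the principal part a positive gain of order at least $2 a \kappa_0 L^{1+\alpha}$, while the first-order term contributes
$$
\bigl| b(\bar x) |p_x|^\beta - b(\bar y) |p_y|^\beta \bigr| \leq |b|_{W^{1,\infty}} |\bar x - \bar y| L^\beta + |b|_\infty L^{\beta - 1} M,
$$
and the source contributes $|f(\bar x) - f(\bar y)| \leq 2 |f|_\infty$.

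\textbf{Main obstacle.}
The whole argument reduces to closing the inequality
$$
2 a \kappa_0 L^{1+\alpha} \leq |b|_{W^{1,\infty}} |\bar x - \bar y| L^\beta + |b|_\infty L^{\beta - 1} M + 2 |f|_\infty + \text{(lower order)}.
$$
Substituting $|\bar x - \bar y| \leq C |u|_\infty / L$ from the preliminary bound, the right-hand side is controlled by a constant times $L^{\beta - 1}$, and this is where the subquadratic hypothesis $\beta \leq \alpha + 2$ enters decisively: since $\beta - 1 \leq \alpha + 1$, the left-hand side dominates for $L$ sufficiently large in terms of $|u|_\infty, |b|_{W^{1,\infty}}, |f|_\infty, a, \kappa_0, M$, producing the desired contradiction. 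The delicate point is the management of the cross terms such as $(|p_x|^\alpha - |p_y|^\alpha) F(X)$, which can be of size $M L^{1+\alpha}$ because $X$ may be as large as $L^2$ in transverse directions; one handles them by tuning $\kappa_0$ (i.e.\ $\omega_0$) large enough that $2 a \kappa_0 > C M$, absorbing the cross terms into the ellipticity gain. This closes the contradiction, yielding $\Phi \leq 0$ and hence the Lipschitz estimate with the stated dependence on the data.
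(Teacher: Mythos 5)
The paper does not actually prove Theorem \ref{theolip}: it is quoted from \cite{BDL2}, which is where the proof lives. Your Ishii--Lions doubling scheme is indeed the method of that reference, and most of your steps are on target: the localization by $M$, the bound $|\bar x-\bar y|\le C|u|_\infty/L$, the fact that $p_x,p_y\neq 0$ so the singular/degenerate equation can be tested, the use of the Lipschitz continuity of $b$ to write the Hamiltonian difference as $O(L^{\beta-1})$, and the role of $\beta\le\alpha+2$.

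There is, however, a genuine gap exactly at the point you call delicate, and your proposed fix does not close it. You estimate the cross term $(|p_x|^\alpha-|p_y|^\alpha)F(X)$ by $C\,M L^{1+\alpha}$ on the grounds that ``$X$ may be as large as $L^2$'', and then absorb it by taking $\kappa_0>CM$. But the Crandall--Ishii lemma only gives $\Vert X\Vert,\Vert Y\Vert\le C\,L\,\omega'(|\bar x-\bar y|)/|\bar x-\bar y|$, i.e. of order $L/|\bar x-\bar y|$, and you have no \emph{lower} bound on $|\bar x-\bar y|$ --- only the upper bound $|\bar x-\bar y|\le C|u|_\infty/L$. Hence the cross term is of size $ML^{\alpha}/|\bar x-\bar y|$, which cannot be dominated by $\kappa_0L^{1+\alpha}$ for any fixed $\kappa_0$; a lower bound $|\bar x-\bar y|\ge c/L$ would amount to the Lipschitz estimate you are trying to prove. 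The standard remedy (and the one used in \cite{BD1}, \cite{BDL2}) is never to form this cross term: since $p_x,p_y\neq0$, divide the subsolution inequality by $|p_x|^\alpha$ and the supersolution inequality by $|p_y|^\alpha$ \emph{before} subtracting. This yields
$$
F(X)-F(Y)\ \ge\ b(\bar x)|p_x|^{\beta-\alpha}-b(\bar y)|p_y|^{\beta-\alpha}-f(\bar x)|p_x|^{-\alpha}+f(\bar y)|p_y|^{-\alpha},
$$
to be confronted with the ellipticity bound $F(X)-F(Y)\le C A N M-2a\kappa_0L$ coming from the matrix inequality; all the ``cross'' contributions now fall on $f$ and on the Hamiltonian, where they are of order $L^{-\alpha}$ and $L^{\beta-\alpha-1}\le L$ respectively, and the matrices $X,Y$ never need to be estimated in norm. (Equivalently, one may keep your formulation but bound $F(X)$ from below and $F(Y)$ from above \emph{using the equation itself} rather than the Crandall--Ishii bound.) A secondary slip: at the endpoint $\beta=\alpha+2$ both sides of your final inequality scale like $L^{1+\alpha}$, so ``$L$ sufficiently large'' does not conclude; you must first choose $\kappa_0$ (i.e. $\omega_0$) large compared with $|b|_{W^{1,\infty}}|u|_\infty+|b|_\infty M$, and only then $L$ large, compatibly with the constraint $t_0\le 1/(2\omega_0)$ on the domain of concavity of $\omega$.
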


       \begin{rema}
  {\rm  The assumption that  $b$
 is  Lipschitz continuous   is needed only in the case $\beta = \alpha+2$. For the case $\beta < \alpha+2$, $b$ bounded is sufficient.}
 \end{rema}

  Since we want to prove local estimates inside $B(0, r)$, we can suppose that $u$ is Lipschitz continuous on  the whole of $B(0,1)$ and we will set   $M= |u|_{ W^{1, \infty}(B(0,1)} $. 
  
\emph{Proof  of Theorem \ref{C1reg}}. 
   We will use  both a truncation method, and a fixed point argument. The previous estimate (\ref{Cr}) will then enable  us to say that  a  solution  of (\ref{eq1})
    is locally  ${ \cal C}^{1, \gamma}$.

               In the following,  $T_M$ will denote the truncation operator at the level $M$, more precisely $T_M ( s) = \inf \{ |s|, M\} {s \over |s|}$.

          Let  $\epsilon_\alpha = {2^{-1-{(-\alpha)^+ \over 1+ \alpha}}\over C_{lip}}$. Let  us observe that using  (\ref{lipalpha}), when $g$ is continuous and bounded,   the unique solution $w$, ( see \cite{BD1}) of 
          $$ \left\{ \begin{array}{lc}
           -| \nabla w | ^\alpha F( D^2 w) + \epsilon_\alpha^{1+ \alpha}  |w|^\alpha w = g& {\rm in} \ B(0,1)\\
           w = u & {\rm on} \ \partial B(0,1)
           \end{array}
           \right.$$
            satisfies 
            $$|w|_{ W^{1, \infty}( B(0,1)} \leq 2^{1+ ({-\alpha)^+ \over 1+ \alpha}}C_{lip} (|g|_\infty^{1 \over 1+ \alpha}+ |u|_{ W^{1, \infty}})$$
             Indeed, by (\ref{lipalpha})
             \begin{eqnarray*}
              |w|_{ W^{1, \infty}( B(0,1)} &\leq &C_{lip} ( |g-\epsilon_\alpha ^{1+ \alpha}  |w|^\alpha w |_\infty^{1 \over 1+ \alpha} + |u|_{ W^{1, \infty}}))\\
              &\leq &2^{ (-\alpha)^+ \over 1+ \alpha} C_{lip}( |g|_\infty^{1 \over 1+ \alpha} + \epsilon_\alpha |w|+ |u|_{ W^{1, \infty}}))\\
              &\leq & {  |w|_{ W^{1, \infty}( B(0,1)} \over 2} + 2^{ (-\alpha)^+ \over 1+ \alpha} C_{lip}( |g|_\infty^{1 \over 1+ \alpha}+ |u|_{ W^{1, \infty}}).
              \end{eqnarray*}
              Let 
           $$R= 2^{1+ {(-\alpha)^+ \over 1+ \alpha}}C_{lip} \left(\left\vert  |f|_\infty + \epsilon_\alpha^{1+ \alpha} |u|_\infty ^{1+ \alpha}\right\vert ^{1 \over 1+ \alpha} +  (|b|_\infty M^{ \beta} )^{ 1 \over 1+ \alpha}  + |u|_{ W^{1, \infty} ( B(0,1)} \right) , $$
           define 
           $${\cal K}_R = \{ v \in { \cal C}^1 \cap W^{1, \infty} ( B(0,1)), |v|_\infty + | \nabla  v|_\infty\leq R\}, $$
           and note that ${ \cal K}_R$ is a closed convex set in $ { \cal C}^1 \cap W^{1, \infty} ( B(0,1))$. 
           We also define the map  ${\cal T} : v \rightarrow w$ where $w$ is  the unique  solution, ( see \cite{BD1}) of 
             
$$
      \left\{\begin{array}{lc}
       - | \nabla w |^{ \alpha} F( D^2 w) + \epsilon_\alpha ^{1+ \alpha} |w|^\alpha w =  \left( f+\epsilon_\alpha ^{1+ \alpha}  |u|^\alpha u  - b(x) T_M ( | \nabla v |)^\beta \right)& {\rm in } \ B(0,1)\\
        w= u & {\rm on } \ \partial B( 0,1)
        \end{array}\right.
      $$
         $w$ is well defined since the right hand   is continuous and bounded. By ( \ref{lipalpha} )  $w$ satisfies 
         \begin{eqnarray*}
          |w|_{ W^{1, \infty} (B(0,1)} &\leq& C_{lip} \left(  (|f|_\infty + \epsilon_\alpha ^{1+ \alpha}  |u|_\infty^{1+ \alpha} + |b|_\infty M^\beta)^{1\over 1+ \alpha}  +  |u|_{ W^{1, \infty} ( B(0,1)}\right)\\
          &\leq & 2^{ {(-\alpha)^+ \over 1+ \alpha}}C_{lip} \left(  (|f|_\infty + \epsilon_\alpha ^{1+ \alpha}  |u|_\infty^{1+ \alpha} )^{1\over 1+\alpha} + ( |b|_\infty M^\beta )^{1\over 1+ \alpha} +   |u|_{ W^{1, \infty} }\right)\\
                   &\leq R
          \end{eqnarray*}
          by the choice of $R$,  hence ${ \cal T}$ sends ${\cal K}_R$ into itself. Furthermore by classical uniform estimates,(see \cite{BD1}),  ${ \cal T}$ is a compact operator.  Then, by Schauder's fixed  point Theorem,  it possesses  a fixed point $\overline{w}$ which then satisfies 
          \begin{equation}\label{eqalphaunic} \left\{\begin{array}{lc}
       -| \nabla \overline{w} |^\alpha F( D^2 \overline{w}) + b(x) |T_M ( \nabla \overline{w})|^\beta + \epsilon_\alpha ^{1+ \alpha} |\overline{w}|^\alpha\overline{w} = f+ \epsilon_\alpha ^{1+ \alpha}|u|^\alpha u  & {\rm in } \ B(0,1)\\
        w= u & {\rm on } \ \partial B( 0,1)
        \end{array}\right.
        \end{equation} 
        Recall that $| \nabla u | \leq M$,  then $u$ satisfies the same equation. By a mere adaptation of the  comparison principle in \cite{BD1}, there is uniqueness  of  the solution to ( \ref{eqalphaunic}) ,  hence  $u= \bar w$ and by (\ref{Cr}) one gets that $u$ is ${ \cal C}^{1, \gamma}$ for the $\gamma$  allowed by (\ref{Cr}). 
         
\hfill$\Box$
          
         \begin{rema}  
{\rm Let us observe that, in the case $\alpha \leq 0$, and  if the operator $F$ is convex or concave in the Hessian argument, then we can repeat the above proof with the $C^{1,\gamma}_{loc}$ estimates replaced by the $W^{2,p}_{loc}$ estimates, for any $1<p<\infty$, see \cite{Caf, CCKS}. This yields the local a priori estimate, for any $\omega \subset \subset \omega^\prime \subset \subset \Omega$
$$
\|u\|_{W^{2,p}(\omega)}\leq C_p \left( \|u\|_{L^\infty(\omega')} , \|f\|_{L^\infty(\omega')}, |b|_{ W^{1, \infty}( \omega^\prime)}\right)
$$
for any viscosity solution $u$ of equation \eqref{eq1}.

Furthermore, suppose that $\Omega\subset \R^N$ is a  smooth  bounded domain, that $f \in { \cal C} ( \overline{ \Omega})$, $b \in W^{1, \infty} ( \Omega)$,   that  $u$ is a viscosity solution  of \eqref{eq1} in $\Omega$,  which  satisfies the boundary condition $u=\psi$ on $\partial \Omega$, with $\psi\in C^{1,\gamma_0}(\partial \Omega)$, by using the up to the boundary estimates of \cite{W}, \cite{BDcocv}, we obtain the global regularity bound
$$
\|u\|_{C^{1,\gamma}(\overline{\Omega})}\leq C  \left( \|\psi\|_{C^{1,\gamma}(\partial \Omega)} , \|f\|_{L^\infty(\Omega)},  |b|_{ W^{1, \infty}( \Omega)}\right)
$$
for some exponent $\gamma\leq \inf ( \gamma_0, {1 \over 1+ \alpha^+}) $.}

\end{rema}

\medskip

 \begin{rema}
  {\rm As  in \cite{SN}, 
   we can extend our results  (only  for  $\alpha \leq 0$) to the case where  $F(M)$ is replaced by 
    $F( p, M)$,  satisfying the following structural assumptions: there exist positive constants $\mu , b$  such that 
     for any $(p, q)\in (\R^N)^2$, $(X, Y)\in ({\cal S}_N)^2$ one has 
      \begin{eqnarray*}\label{SC}
      -b | p-q|-\mu ( |p|+ |q|) ( |p-q|)
       &+& { \cal M}^-( X-Y) \\
      &\leq& F(  p, X)-F(  q, Y)\\
      & \leq& { \cal M}^+( X-Y) + b | p-q|+\mu ( |p|+ |q|) ( |p-q|) 
      \end{eqnarray*}
     where ${ \cal M}^+$ and ${\cal M}^-$ denote the Pucci extremal operators.     The fixed point argument  and the truncation method can be easily  adapted to this case.}
   \end{rema}
   
\section{Gradient estimates and  proof of Theorem \ref{unique}.}

As an application of the regularity results proved in the previous section, we now focus on the  ergodic pairs associated to the class of operators we are considering. 

Precisely, 
given a  function $f\in \mathcal{C}(\Omega)$, let  $c_\Omega\in \R$  be a  constant for which there exist  solutions $u\in \mathcal{C}(\Omega)$ of the infinite boundary value problem \eqref{ergodicp}.

\noindent In \cite{BDL1} we gave sufficient conditions for the existence of  ergodic pairs $(c_\Omega , u)$ which solve (\ref{ergodicp}),  and we proved the uniqueness of $c_\Omega$ in some cases. Here, we are concerned in particular with the uniqueness of  $u$.

As proved in \cite{BDL1} and recalled in the introduction, the rate of boundary explosion  of any ergodic function can be made precise assuming that the operator $F$ satisfies the "boundary"  regularity condition  
\begin{equation}\label{C2reg}
F(\nabla d(x)\otimes \nabla d(x)) \hbox{ is a $\mathcal{C}^2$ function in a neighborhood of $\partial \Omega$\, .}
\end{equation}
Here, $d(x)$ denotes the distance function from $\partial \Omega$, and it is of class $\mathcal{C}^2$ in a neighborhood of $\partial \Omega$ by the regularity assumption on the domain. Condition    \eqref{C2reg} is certainly satisfied if the domain $\Omega$ is of class ${\mathcal C}^3$ and the operator $F$ is  ${\cal C}^2$, but there can be also cases with non smooth $F$ satisfying \eqref{C2reg} in ${\mathcal C}^2$ domains. For instance, when  $F(M)$  depends only on the eigenvalues of $M$, as in the case of  Pucci's operators,  $F(\nabla d(x)\otimes \nabla d(x))$ is a constant function as long as $|\nabla d(x)|=1$. 

Under assumptions  \eqref{eqF1} and \eqref{C2reg} on $F$, and if $f\in {\mathcal C}(\Omega)$ is locally Lipschitz continuous, bounded from below and satisfying
$$
\lim_{d(x)\to 0} f(x) d(x)^{\frac{\beta}{\beta -\alpha -1}}=0\, ,
$$
then ergodic pairs $(c_\Omega, u)$ exist, and any ergodic function $u$ satisfies
\begin{equation}\label{asym1}
\lim_{d(x)\to 0} \frac{u(x)\, d(x)^\chi}{C(x)}=1  \mbox{ if}\  \chi>0, 
\end{equation}
and 
\begin{equation}\label{asym2}
 \lim_{d(x)\to 0} \frac{u(x)}{|\log d(x)|\, C(x)}=1   \hbox{ if}\  \chi=0\, ,
\end{equation}
where
$$\chi = \frac{ 2+ \alpha-\beta }{ \beta-1-\alpha }\, ,$$
and, for $x$ in a neighborhood of $\partial \Omega$,    
\begin{equation}\label{C(x)}
\begin{array}{ccc}C(x)=\left((\chi+1)F(\grad d(x)\otimes\grad d(x))\right)^{\frac{1}{ \beta-\alpha-1}}\chi^{-1}\  &\mbox{if }& \chi>0,\\[1ex]
 C(x)=F(\grad d(x)\otimes\grad d(x))\quad & \mbox{if }& \chi=0.
 \end{array}
\end{equation}
Let us now observe that any ergodic function verifies the assumptions of  Theorem \ref{C1reg}, so that it is  a $C^{1,\gamma}_{loc}(\Omega)$ function and satisfies the local a priori bound \eqref{bound1}. This regularity property, jointly with the asymptotic estimates \eqref{asym1},  allows us to precise, at least in the case $\chi>0$,  the   boundary asymptotic behaviour of its gradient. We obtain the analogous of the  result proved in \cite{P} for   Laplace operator and in  \cite{LP} for   $p$-Laplace operator.
\begin{theo}\label{asymgrad} Assume that $F$ satisfies \eqref{eqF1} and \eqref{C2reg},  let $f\in {\mathcal C}(\Omega)$ be  bounded and suppose that $\chi>0$ (i.e. $\beta<\alpha+2$). Then, for   any ergodic function $u$, one has
 \begin{equation}\label{asymgrad1}
 \lim_{d(x)\to 0}   \frac{d(x)^{\chi+1} \nabla u(x)\cdot \nabla d(x)}{C(x)}=
 -\chi \, .
 \end{equation}
\end{theo}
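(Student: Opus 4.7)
The plan is a blow-up argument that leverages the interior $C^{1,\gamma}$ regularity provided by Theorem \ref{C1reg}. Arguing by contradiction, suppose there exist $\varepsilon_0>0$ and a sequence $x_n\in\Omega$ with $d_n:=d(x_n)\to 0$ and
$|d_n^{\chi+1}\nabla u(x_n)\cdot\nabla d(x_n)/C(x_n)+\chi|\geq\varepsilon_0$; by compactness of $\partial\Omega$ we may assume $x_n\to x_0\in\partial\Omega$ and set $\nu=\nabla d(x_0)$. The natural rescaling dictated by \eqref{asym1} is
$$
v_n(y)=\frac{d_n^\chi}{C(x_n)}\,u(x_n+d_n y),\qquad y\in\Omega_n:=\{y:x_n+d_n y\in\Omega\}.
$$

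Using the identity $\chi+1=1/(\beta-\alpha-1)$, the two principal terms of \eqref{ergodicp} rescale with the same factor, and a direct computation gives
$$
-|\nabla v_n|^\alpha F(D^2 v_n)+C(x_n)^{1/(\chi+1)}|\nabla v_n|^\beta=g_n(y)\quad\mbox{in }\Omega_n,
$$
with $g_n(y)=d_n^{\chi(1+\alpha)+\alpha+2}\,C(x_n)^{-(1+\alpha)}\bigl(f(x_n+d_n y)+c_\Omega\bigr)$. Since $\chi>0$ and $1+\alpha>0$, the $d_n$-exponent is strictly positive, so $\|g_n\|_\infty\to 0$; the coefficient $C(x_n)^{1/(\chi+1)}$ is a bounded constant and hence trivially in $W^{1,\infty}$.

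For $y$ in any compact subset $K$ of the half-space $\{1+\nu\cdot y>0\}$, a first-order Taylor expansion of the distance function yields $d(x_n+d_n y)/d_n\to 1+\nu\cdot y$, so \eqref{asym1} together with the continuity of $C$ at $x_0$ imply that $v_n$ is uniformly bounded on $K$ and converges there uniformly to the explicit one-dimensional profile
$$
v_\infty(y):=(1+\nu\cdot y)^{-\chi}.
$$
Applied on a slightly larger compact $K'\subset\subset\{1+\nu\cdot y>0\}$, Theorem \ref{C1reg} then provides $C^{1,\gamma}$ bounds on $v_n$ in $K$ that are uniform in $n$. By Ascoli--Arzel\`a, the uniform convergence $v_n\to v_\infty$ upgrades to $C^1_{\mathrm{loc}}$ convergence. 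Evaluating at $y=0$ and using $\nabla v_n(0)=d_n^{\chi+1}\nabla u(x_n)/C(x_n)$, we obtain $\nabla v_n(0)\to\nabla v_\infty(0)=-\chi\nu$; taking the inner product with $\nabla d(x_n)\to\nu$ gives
$$
\frac{d_n^{\chi+1}\nabla u(x_n)\cdot\nabla d(x_n)}{C(x_n)}\longrightarrow -\chi\,|\nu|^2=-\chi,
$$
contradicting the choice of the sequence $x_n$.

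The main obstacle I foresee is the uniform application of Theorem \ref{C1reg} to the blown-up family: one must fix a pair $\omega\subset\subset\omega'$ once and for all, strictly inside the half-space $\{1+\nu\cdot y>0\}$, check that $\omega'\subset\Omega_n$ for $n$ large (so that the rescaled boundary has receded from $\omega'$), and verify that the constants in \eqref{bound1} stay bounded as $n\to\infty$. This in turn reduces to controlling $\|v_n\|_{L^\infty(\omega')}$, $\|g_n\|_{L^\infty(\omega')}$ and the coefficient uniformly in $n$, which is exactly what \eqref{asym1} and the computation of $g_n$ provide.
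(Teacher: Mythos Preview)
Your argument is correct and follows essentially the same blow-up strategy as the paper: rescale near the boundary, use \eqref{asym1} to identify the limit profile, apply Theorem~\ref{C1reg} for uniform $C^{1,\gamma}$ bounds, and upgrade to $C^1$ convergence. The only differences are cosmetic---you center the rescaling at the interior point $x_n$ rather than at its boundary projection, normalize by $C(x_n)$, and phrase the argument as a contradiction on a bad sequence instead of as a direct limit uniform in $x_0\in\partial\Omega$; none of these changes the substance.
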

  
\begin{proof}
Let us consider, for  $x_0\in \partial \Omega$ fixed and $\delta>0$, the function
$$
u_\delta (\zeta)=\delta^\chi u(x_0+\delta\, \zeta)\, ,
$$
defined for $\zeta \in \frac{1}{\delta}\left( \Omega -x_0\right)$.

By \eqref{asym1}, one has
$$\lim_{\delta\to 0}  u_\delta (\zeta)= \frac{C(x_0)}{\left( \nabla d(x_0)\cdot \zeta\right)^\chi}
$$
locally uniformly with respect to $\zeta$ in the halfspace $H=\{ \zeta \in \R^N\, :\ \zeta \cdot \nabla d(x_0)>0\}$, and uniformly with respect to $x_0\in \partial \Omega$. In particular, $u_\delta$ is locally uniformly bounded in $H$.

Moreover, by direct computation, $u_\delta$ satisfies the equation
$$
-|\nabla u_\delta|^\alpha F(D^2 u_\delta) + |\nabla u_\delta|^\beta = \delta^{\frac{\beta}{\beta-\alpha-1}} \left[ f(x_0+\delta\, \zeta)+c\right]\quad \hbox{ in } \frac{1}{\delta}\left( \Omega -x_0\right)\, .
$$
Thus, as a consequence of Theorem \ref{C1reg}, $u_\delta$ belongs to $\mathcal{C}^{1,\gamma}_{loc}(H)$ and verifies  estimate \eqref{bound1}. This implies that $u_\delta$ is  converging in $\mathcal{C}^1_{loc}(H)$, and, therefore,
$$
\lim_{\delta \to 0} \nabla u_\delta (\zeta)=- \chi \frac{C(x_0)\, \nabla d(x_0)}{\left(\nabla d(x_0)\cdot \zeta\right)^{\chi+1}}
$$
locally uniformly with respect to $\zeta\in H$ and, again, uniformly with respect to $x_0\in \partial \Omega$.

Hence, we  deduce that
$$
\lim_{\delta \to 0} d(x_0+\delta\, \zeta)^{\chi+1} \nabla u (x_0+\delta\, \zeta)= -\chi\, C(x_0)\, \nabla d(x_0)
$$
locally uniformly with respect to $\zeta\in H$ and uniformly with respect to $x_0\in \partial \Omega$. This immediately yields \eqref{asymgrad1}. 
\end{proof}

\begin{rema}
{\rm In the case $\chi=0$, one can try to use an analogous argument as above, and to  consider the function
$$ u_\delta ( \zeta) = u( x_0+ \delta \zeta)+ C(x_0)\log ( \delta)\, . $$ 
By Theorem 4.2 and Theorem 6.3 of \cite{BDL1}, it follows that $u_\delta$  is uniformly bounded. Moreover, arguing as in the above proof,  we obtain that $u_\delta$ actually converges in $\mathcal{C}^1_{loc}(H)$  to a solution of 
$$\left\{ \begin{array}{cl}
 -| \nabla v |^\alpha  F( D^2 v) + | \nabla v |^{2+ \alpha} = 0 & \hbox{ in } \ H\\
  v = +\infty & \hbox{ on } \partial H
  \end{array}\right.$$
    Using the  same argument as in Section 4 of  \cite{IS}, one gets that $v$ satisfies 
    $$\left\{ \begin{array}{cl}
 -  F( D^2 v) + | \nabla v |^2 = 0 & \hbox{ in } H\\
  v= +\infty & \hbox{ on } \partial H
  \end{array}\right.$$
Now, consider first the case when  $F$ is a \emph{linear operator},  that is $F(M) = a\,  {\rm tr} (M)$. Then,  defining $ \varphi = e^{-v/a}$ , one sees that $\varphi$ is positive and  harmonic   in $H$, and it satisfies   zero  boundary conditions. Hence,  $\varphi (\zeta) = c\, \nabla d(x_0)\cdot \zeta$ for some constant $c>0$. Coming back to $v$,  one gets that 
$v(\zeta)  =  -a\, \log \nabla d(x_0)\cdot \zeta - a\, \log c$, and,
 by the local  ${\cal C}^1$ convergence of $u_\delta$ to $v$, we conclude that 
$$
\lim_{\delta\to 0} \delta\, \nabla u(x_0+\delta\, \zeta)=- \frac{ a\, \nabla d(x_0)}{\nabla d(x_0)\cdot \zeta}\, .
$$
 Observing that in this case $a=C(x_0)$, we deduce  for linear operators the asymptotic gradient behaviour
 $$
 \lim_{d(x)\to 0}   \frac{d(x) \nabla u(x)\cdot \nabla d(x)}{C(x)}=
 -1 \, ,
$$
which is the analogous of \eqref{asymgrad1} for $\chi=0$. 

For general $F$,  an analogous result could be obtained as a consequence of the following Liouville type result:   if  $u$ is a solution in the half space $H=\{ x_N >0\}$ of 
  $$\left\{ \begin{array}{lc}
   -F( D^2 u) + | \nabla u |^2 = 0 & {\rm in} \ \{x_N >0\}\\
    u( x^\prime, 0) = +\infty, &
\end{array} \right.$$
 then there exists some constant $c$ such that 
 $$u (x) = F( e_N \otimes e_N) | \log x_N|+ c \, . $$
 By the time being, this is an open question. }
 \end{rema}

We are finally  in the position to prove the uniqueness, up to additive constants, of the ergodic function.
\medskip

\noindent  \emph{Proof of Theorem \ref{unique}.} By Theorem \ref{asymgrad},  any ergodic function $u$ satisfies the asymptotic gradient boundary behaviour \eqref{asymgrad1}.         
          Hence, there exists a positive  constant $C$ such that 
          $| \nabla u | \geq C d^{ -\chi-1}$ for $d < \delta$ . We can suppose that $\delta$ is so small that    $( \beta-1-\alpha) C d^{-( \chi+1) \beta} > 2 |f|_\infty  (1+ \alpha)  $.    
         
           Suppose now that  $u$ and $v$ are two  ergodic functions related to the same ergodic constant $c_\Omega$.   Recall that 
          $ \Omega_\delta= \{ x \in \Omega, d( x) < \delta\}$ and consider 
           $u_\epsilon = (1-\epsilon) u$.
             Let for further computations $c_\alpha$ and $c_\beta$ some positive constants so that for $\epsilon < {1\over 2}$, 
           $$ |( 1-\epsilon )^{1+ \alpha}-1+(1+ \alpha) \epsilon | \leq c_\alpha \epsilon^2$$
            $$ |( 1-\epsilon )^{\beta}-1+\beta \epsilon | \leq c_\beta \epsilon^2$$
             and take 
             $\epsilon < { \beta-\alpha -1 \over 4} \inf_{ \Omega_\delta}{ | \nabla u |^\beta \over ( c_\alpha+ c_\beta) ( | \nabla u |^\beta + |f+ c |_\infty)}$. 
         Then,  $u_\epsilon$ is  a strict sub-solution in $\Omega_\delta$.  Indeed 
          \begin{eqnarray*}
            -| \nabla u_\epsilon |^\alpha F( D^2 u_\epsilon) &+& | \nabla u_\epsilon |^\beta -(f+ c_\Omega) \\
            &=& (1-\epsilon)^{1+ \alpha} \left( -| \nabla u |^\alpha F( D^2 u) + | \nabla u |^\beta -(f+ c_{\Omega})\right)\\
            &+& ((1-\epsilon)^\beta -(1-\epsilon )^{1+ \alpha}) | \nabla u |^\beta + (f+ c_\Omega) ((1-\epsilon )^{1+ \alpha}-1)\\
            &\leq & - \epsilon ( \beta-1-\alpha) C d^{-( \chi+1) \beta} + |f+ c_\Omega |_\infty \epsilon (1+ \alpha)\\
            &+& \epsilon ^2 (c_\alpha + c_\beta) \left( | \nabla u |^\beta + |f+c|_\infty\right)\\
            &<&0
            \end{eqnarray*}
            By the asymptotic behavior both of $u$ and $v$, let  $V_\epsilon$ a neighborhood of the boundary on which $u_\epsilon < v$. Applying 
           the comparison principle in $\Omega_\delta \setminus \overline{V_\epsilon}$ (see \cite{BD1}), when one of the sub- (super-) solution is strict,  one gets that  
             $u_\epsilon \leq v+ \sup_{ d = \delta } ( u_\epsilon -v)$  in $\Omega_\delta \setminus \overline{V_\epsilon}$, hence finally in the whole of $\Omega_\delta$, 
              Passing  to the limit one gets that 
              $ u \leq v + \sup_{ d= \delta} ( u-v)$  in $\Omega_\delta$. On the other hand,  using the comparison principle without zero order terms when $\sup ( f + c_\Omega )<0$,   proved in \cite{BDL1}, one gets that 
              $ u \leq v+\sup_{ \partial \Omega_\delta} ( u-v)$  in $\Omega\setminus \overline{ \Omega}_\delta $. We need to prove that $u $ indeed  coincides with  $v+ \sup_{ d= \delta} ( u-v):= v+m$. The step before says that the supremum of $(u-v)$ in $\Omega$ is achieved on $d = \delta  $,  hence inside $\Omega$. 
               When $\alpha = 0$, the strong maximum principle implies that $ u = v+ m$. 
              
              When $\alpha \neq 0$,  suppose that $\partial \Omega$ has only one connected component, then $\Omega_\delta$ is connected. 
                          We want to  prove that  in the whole of $\Omega_\delta$, $u = v+    m$.  Indeed note that $\Omega_\delta$ has been chosen so that $| \nabla u | \geq C d^{-\chi-1}$  inside it,  hence in particular $\nabla u\neq 0$ in $\Omega_\delta$.    Then there is an   $\bar x\in \Omega$, such that                $ u( \bar x) = v( \bar x)+ m$, $u\leq v+ m$, and $\nabla u( \bar x)  = \nabla  v( \bar x) \neq 0$.                 Using the strong comparison principle in \cite{BD3} one gets that there exists a neighborhood $V_{ \bar x}$ of $\bar x$ where 
                $ u\equiv v+ m$. 
                 Denote 
                 ${ \cal O}_\delta = \{ x \in \Omega_\delta, u( x) = v(x)+ m\}$. By the previous argument there is one ball $B( \bar x, r)$ so that 
                 $ B( \bar x, r) \cap \Omega_\delta \subset { \cal O}_\delta$. In particular ${ \cal O}_\delta$ is non empty, and  the same argument proves that ${ \cal O}_\delta$ is open.  By definition it is closed, so ${ \cal O}_\delta= \Omega_\delta$. Then applying the comparison Theorem without zero order terms in $\Omega \setminus \overline{ \Omega}_\delta$, since we have on its boundary 
                  $ u = v+m$ one gets both that $u \leq v+ m$ and $u \geq v+ m$. Finally $u = v+ m$.   
                      
\hfill$\Box$

\noindent                    {\bf Acknowledgment. }
                    This work was done  while the first and third author  were visiting the University of Cergy -Pontoise,  and  the second one the University of Roma 1, supported by INdAM-GNAMPA.

\end{document}